\newcommand{\iid}{\operatorname{\stackrel{i.i.d.}{\sim}}}
\newcommand{\tr}{\mbox{\rm trace}}
\DeclareMathOperator*{\argmin}{\mbox{\rm argmin}}
\newtheorem{Def}{Definition}
\newtheorem{Th}[Def]{Theorem}
\newtheorem{Conj}[Def]{Conjecture}
\newtheorem{As}[Def]{Assumptions}
\newcommand{\cD}{\mathcal{D}}
\newcommand{\cN}{\mathcal{N}}
\newcommand{\SSS}{{{\mathbb S}^m}}
\newcommand{\Prb}{\mathbb P}
\newcommand{\inD}{\operatorname{\stackrel{\cD}{\to}}}
\newcommand{\mun}{\widehat{\mu}_n}
\newcommand{\EE}{\mathbb{E}}
\newcommand{\NN}{\mathbb{N}}
\newcommand{\RR}{\mathbb{R}}
\newcommand{\V}{\mathbb{V}}
\newcommand{\SSm}{\mathbb{S}^m}
\newcommand{\SSo}{\mathbb{S}^1}
\newcommand{\fm}{\mathfrak{m}}
\begin{document}
\title{Finite Sample Smeariness on Spheres} 
\titlerunning{FSS on Spheres} 
\thispagestyle{empty}
\author{Benjamin Eltzner\inst{1}, Shayan Hundrieser\inst{2}  \and  Stephan Huckemann\inst{1}}
\authorrunning{Eltzner and Huckemann}
\institute{$^1$Georg-August-Universit\"at G\"ottingen, Germany, Felix-Bernstein-Institute for Mathematical Statistics in the Biosciences, \\
  $^2$Georg-August-Universit\"at G\"ottingen, Germany, Institute for Mathematical Statistics, \\
  Acknowledging DFG HU 1575/7, DFG GK 2088, DFG EXC 2067, DFG SFB 1465 and the Niedersachsen Vorab of the Volkswagen Foundation}

\maketitle
\thispagestyle{plain}

\begin{abstract}
    \emph{Finite Sample Smeariness} (FSS) has been recently discovered. It means that the distribution of sample Fr\'echet means of underlying rather unsuspicious random variables can behave as if it were smeary for quite large regimes of finite sample sizes. In effect classical quantile-based statistical testing procedures do not preserve nominal size, they reject too often under the null hypothesis. Suitably designed bootstrap tests, however, amend for FSS. On the circle it has been known that arbitrarily sized FSS is possible, and that all distributions with a nonvanishing density feature FSS. These results are extended to spheres of arbitrary dimension. In particular all rotationally symmetric distributions, not necessarily supported on the entire sphere feature FSS of Type I. While on the circle there is also FSS of Type II it is conjectured that this is not possible on higher-dimensional spheres.
\end{abstract}


\section{Introduction}
In non-Euclidean statistics, the Fr\'echet mean \citep{F48} takes the role of the expected value of a random vector in Euclidean statistics. Thus an enormous body of literature has been devoted to the study of Fr\'echet means and its exploitation for descriptive and inferential statistics \citep{HL98,BP05, H_Procrustes_10,LeBarden2014,BL17}. For the latter, it was only recently discovered that the asymptotics of Fr\'echet means may differ substantially from that of its Euclidean kin \citep{HH15,EltznerHuckemann2019}. Initially, such examples were rather exotic. Corresponding distributions have been called \emph{smeary}. More recently, however, it has been discovered that also for a large class of classical distributions (e.g. all with nonvanishing densities on the circle, like, e.g. all von-Mises-Fisher distributions) Fr\'echet means behave in a regime up to a considerable sample sizes as if they were smeary. We call this effect \emph{finite sample smeariness} (FSS), also the term \emph{lethargic means} has been suggested. Among others, this effect is highly relevant for asymptotic one- and two-sample tests for equality of means. In this contribution, after making the new terminology precise, we illustrate the effect of FSS on statistical tests concerning the change of wind directions in the larger picture of climate change. 
 
 Furthermore, while we have shown earlier that FSS of any size can be present on the circle and the torus, here we show that FSS of arbitrary size is also present on spheres of arbitrary dimension, at least for local Fr\'echet means. For such, on  high dimensional spheres, distributions supported by barely more than a geodesic half ball may feature arbitrary high FSS. Moreover, we show that a large class of distributions on spheres of arbitrary dimension, namely all rotationally symmetric ones, e.g. all Fisher distributions, feature FSS. This means not only that the finite sample rate may be wrong, also the rescaled asymptotic variance of Fr\'echet means may be considerably different from the sample variance in tangent space.   

\section{Finite Sample Smeariness on Spheres}

Let $\SSm$ be the unit sphere in $\RR^{m+1}$ for $m > 1$ and $\SSo = [-\pi,\pi)/\sim$ with $-\pi$ and $\pi$ identified be the unit circle, with the distance
$$d(x,y) = \left\{\begin{array}{rcl}\arccos (x^Ty)&\mbox{ for }& x,y\in \SSm,\\ 
\min\{|y-x|, 2\pi- |y-x|\}&\mbox{ for }&x,y \in \SSo.
                  \end{array}\right.$$
For random variables $X_1,\ldots,X_n \iid X$ on $\SSm$ , $m\geq 1$, with silently underlying probability space $(\Omega,\Prb)$ we have the \emph{Fr\'echet functions}
\begin{eqnarray}\label{eq:Frechet-fcns} F(p) = \EE[d(X,p)^2]&\mbox{ and }& F_n(p) = \frac{1}{n}\sum_{j=1}^n d(X_j,p)^2\mbox{ for }p\in \SSm\,.\end{eqnarray}
We work under the following assumptions. In particular, the third Assumption below is justified by \cite[Lemma 1]{TranEltznerHuGSI2021}.

\begin{As}\label{As:1}
Assume that
\begin{enumerate}
  \item $X$ is not a.s. a single point, 
  \item there is a unique minimizer $\mu = \argmin_{p\in \SSm} F(p)$, called the \emph{Fr\'echet population mean}, 
  \item for $m>1$, $\mu$ is the north pole $(1,0,\ldots,0)$ and $\mu =0$ on $\SSo$, 
  \item $\mun \in \argmin_{p\in \SSm} F_n(p)$ is a selection from the set of minimizers uniform with respect to the Riemannian volume, called a \emph{Fr\'echet sample mean},
\end{enumerate}
\end{As}

Note that $\Prb\{X = - \mu\} =0$ for $m>1$ and $\Prb\{X = - \pi\} =0$ on $\SSo$ due to \cite{LeBarden2014,HH15}.

\begin{Def}
We have the \emph{population variance} 
$$V := F(\mu) = f(0) = \EE[d(X,\mu)^2]\,,$$
which, on $\SSo$ is just the classical variance $\V[X]$, and 
the \emph{Fr\'echet sample mean variance} 
$$V_n :=  \EE[d(\mun,\mu)^2] $$ giving rise to the \emph{modulation}  
\begin{eqnarray*}
 \fm_n &:=& \frac{nV_n}{V}\,.
\end{eqnarray*} 
\end{Def}

We have the following finding from \cite{HundrieserEltznerHuckemann2020}

\begin{Th}\label{Th:ModulationProperties}
 Consider $X_1,\ldots,X_n \iid X$ on $\SSo$ and suppose that  $J \subseteq \SSo$ is the support of $X$. Assume Assumption \ref{As:1} and let $n>1$.  

 Then $\fm_n = 1$ under any of the two following conditions 
\begin{itemize}
 \item[(i)] $J$ is strictly contained in a closed half circle,
 \item[(ii)] $J$ is a  closed half circle and one of its end points is assumed by $X$ with zero probability.
\end{itemize}
Further, $\fm_n>1$ under any of the two following conditions 
\begin{itemize}
 \item[(iii)] the interior of $J$ contains a closed half circle,
 \item[(iv)] $J$ contains two antipodal points, each of which is assumed by $X$ with positive probability. 
\end{itemize}
Finally, suppose that $X$ has near $-\pi$ a continuous density $f$.  
\begin{itemize}
 \item[(v)] If $f(-\pi) =0$ then $\lim_{n\to \infty} \fm_n =1$,
 \item[(vi)] if $0<f(-\pi)<\frac{1}{2\pi}$ then $\lim_{n\to \infty} \fm_n =\frac{1}{(1-f(-\pi) 2\pi)^2} >1$\,.
 \end{itemize}
 
\end{Th}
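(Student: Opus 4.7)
The plan is to identify $\SSo$ with $[-\pi, \pi)$, pull everything back to the tangent line at $\mu = 0$ via the Riemannian logarithm, and treat the three regimes separately. For parts (i) and (ii), the key observation is that with probability one every sample point lies in a closed half circle on which $\log_\mu$ is an isometric embedding into $\RR$. On that half circle the squared distance becomes $(X_j - p)^2$ for all $p$ in the convex hull of the lifted sample, so $F_n$ reduces to the Euclidean Fréchet function and a standard convexity argument shows that its unique global minimizer on $\SSo$ is the arithmetic mean $\bar X_n$. Consequently $V_n = \V[\bar X_n] = V/n$, and $\fm_n = 1$ exactly.

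For parts (iii) and (iv), I would use the stationarity condition for $\mun$ in the interior of $[-\pi,\pi)$, namely
\begin{equation*}
\mun = \frac{1}{n}\sum_{j=1}^n \bigl(X_j + 2\pi \mathds{1}\{X_j \leq \mun - \pi\} - 2\pi \mathds{1}\{X_j > \mun + \pi\}\bigr).
\end{equation*}
Under either of the two hypotheses there is an event of positive probability on which a nonzero number of sample points fall on the side of the antipode opposite to $\bar X_n$, producing an additive shift of the form $(2\pi/n) K_n$ with $K_n$ a nonzero integer-valued random variable, essentially decoupled from the fluctuation of $\bar X_n$ itself. Conditioning on this event and comparing $\EE[\mun^2]$ with $\V[\bar X_n] = V/n$ yields the strict inequality $V_n > V/n$, so $\fm_n > 1$.

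For parts (v) and (vi), the plan is to apply a standard M-estimator central limit theorem to $\mun$. A direct calculation gives, for $p$ in a neighborhood of $0$,
\begin{equation*}
F'(p) = 2p - 2\EE[X] - 4\pi\, \Prb(X \leq p - \pi), \qquad F''(p) = 2 - 4\pi f(p - \pi),
\end{equation*}
whenever $f$ is continuous at $p - \pi$. Since $\Prb(X = -\pi) = 0$, the condition $\mu = 0$ forces $\EE[X] = 0$, and $F''(0) = 2 - 4\pi f(-\pi) > 0$ precisely in the regime of (vi). The derivative at $p = 0$ of the single-sample criterion equals $-2X$ almost surely, with variance $4V$, so the CLT reads
\begin{equation*}
\sqrt{n}\, \mun \inD \mathcal{N}\!\left(0,\; \tfrac{V}{(1 - 2\pi f(-\pi))^2}\right).
\end{equation*}
Uniform integrability, which follows from compactness of $\SSo$, upgrades this to convergence of second moments, so $nV_n \to V/(1-2\pi f(-\pi))^2$; dividing by $V$ gives (vi), and (v) is the specialization $f(-\pi) = 0$.

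The main obstacle lies in (iii) and (iv): at fixed $n$ one must rule out that the integer-valued wrap-around shift cancels against the Euclidean fluctuation of $\bar X_n$. Establishing this requires a careful probabilistic decomposition on the wrap-around event and cannot draw on the asymptotic machinery available for (v) and (vi), making it the most delicate step of the argument.
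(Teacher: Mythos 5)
The paper itself does not prove Theorem~\ref{Th:ModulationProperties}; it cites it from \cite{HundrieserEltznerHuckemann2020}, so I am evaluating your proposal on its own terms.

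Your treatment of (i)--(ii) and (v)--(vi) is sound in outline. For (i)/(ii), the reduction to the Euclidean mean on an arc of length at most $\pi$ (with no mass at one endpoint) is correct, and together with $\EE[X]=0$ (forced by $\mu=0$ and $\Prb\{X=-\pi\}=0$) gives $V_n = V/n$ exactly. For (v)/(vi), your computation $F''(0)=2-4\pi f(-\pi)$ and the M-estimator sandwich $\sqrt{n}\,\mun \inD \cN\!\bigl(0, V/(1-2\pi f(-\pi))^2\bigr)$ are correct. One small caveat: compactness of $\SSo$ gives $|\mun|\le\pi$ but not by itself uniform integrability of $n\mun^2$ (that bound grows linearly in $n$); you need a quantitative tail bound on $\sqrt{n}\,\mun$, e.g.\ from Chernoff-type concentration for the empirical Fr\'echet function, before you may pass to second moments. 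This is standard but should be said.

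The genuine gap, which you honestly flag, is (iii)/(iv): you do not rule out that the integer wrap-around shift could cancel the Euclidean fluctuation. In fact no probabilistic decomposition is needed here, because a \emph{pointwise} inequality holds. Write the first-order condition as $\mun = \frac{1}{n}\sum_j Y_j$ with $Y_j = X_j + 2\pi\eta_j$, $\eta_j\in\{-1,0,1\}$, so that $F_n(\mun) = \frac{1}{n}\sum_j Y_j^2 - \mun^2$. Combining $F_n(\mun)\le F_n(\bar X_n) \le \frac{1}{n}\sum_j(X_j-\bar X_n)^2 = \frac{1}{n}\sum_j X_j^2 - \bar X_n^2$ gives
\[
\mun^2 - \bar X_n^2 \;\ge\; \frac{1}{n}\sum_j\bigl(Y_j^2 - X_j^2\bigr) \;=\; \frac{4\pi}{n}\sum_j\bigl(\eta_j X_j + \pi\eta_j^2\bigr) \;\ge\; 0,
\]
since $\eta_j=1$ forces $X_j>-\pi$ and $\eta_j=-1$ forces $X_j<\pi$, so each summand is nonnegative and strictly positive whenever $\eta_j\neq0$. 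Hence $\mun^2 \ge \bar X_n^2$ almost surely, with strict inequality on the wrap-around event; under (iii) or (iv) that event has positive probability for every $n\ge 2$, and $\EE[\bar X_n^2] = V/n$ then yields $\fm_n>1$ directly. Your stationarity formula is the right starting point; the missing step is this deterministic comparison, not a delicate conditioning argument.
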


In \cite{HH15} it has been shown that $f(-\pi) 2\pi$ can be arbitrary close to $1$, i.e. that $\lim_{n\to \infty} \fm_n$ can be arbitrary large. In fact, whenever 
$f(-\pi) 2\pi=1$, then  $\lim_{n\to \infty} \fm_n=\infty$.
These findings give rise to the following. 

\begin{Def}
 We say that $X$ is
 \begin{itemize}
  \item[(i)] \emph{Euclidean} if $\fm_n = 1$ for all $n \in \NN$,
  \item[(ii)] \emph{finite sample smeary} if $1 <\sup_{n\in\NN} \fm_n < \infty$,
  \begin{itemize}
   \item[($ii_1$)]  \emph{Type I finite sample smeary} if $\lim_{n\to \infty} \fm_n >1$, 
   \item[($ii_2$)]  \emph{Type II finite sample smeary} if $\lim_{n\to \infty} \fm_n =1$,
   \end{itemize}
  \item[(iii)] \emph{smeary} if $\sup_{n\in\NN} \fm_n = \infty$.
 \end{itemize}
\end{Def}

\begin{figure}[t!]
\centering
\includegraphics[width = \textwidth, trim = 0 0 0 0, clip]{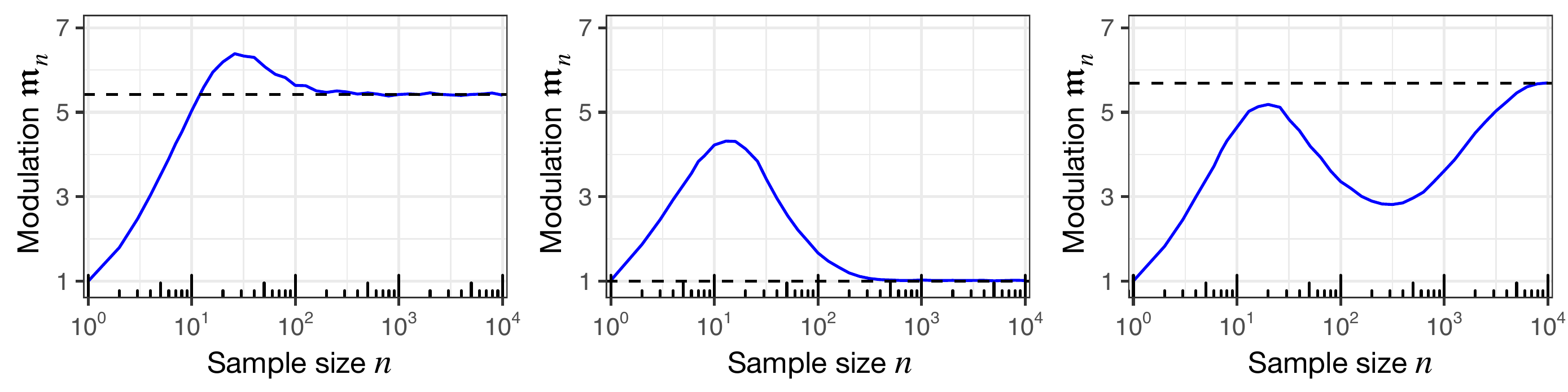}
 \caption{\it Modulation $\fm_n$ for von Mises distribution  (Mardia \& Jupp, 2000) with mean $\mu = 0$ and concentration $\kappa = 1/2$ (left), conditioned on $[-\pi+0.2,\pi-0.2]$ (center), and conditioned on $[-\pi,-\pi +0.1] \cup [-\pi+0.2,\pi +0.2]\cup [\pi-0.1,\pi)$ (right). The dashed lines represent the respective limits of $\fm_n$ obtained by Theorem \ref{Th:ModulationProperties} (v), (vi).  }\label{fig:circular-modulation-curves}
\end{figure}

\section{Why is Finite Sample Smeariness called Finite Sample Smeariness?}

Under FSS on the circle in simulations we see typical shapes of modulation curves in Figure \ref{fig:circular-modulation-curves}. For statistical testing, usually building on smaller sample sizes, as detailed further in Section \ref{scn:testing}, the initial regime is decisive, cf. Figure \ref{fig:circular-modulation-scheme}:

  There are constants $C_+, C_-, K > 0$, $0 < \alpha_- < \alpha_+ < 1 $ and integers $1 < n_- < n_+ < n_0$ satisfying $C_+ n_{-}^{\alpha_+}\leq C_- n_{+}^{\alpha_-}$, such that
  \begin{itemize}
    \item[(a)] $\forall n \in [n_-, n_+] \cap \mathbb{N} \, : \quad 1 < C_- n^{\alpha_-} \le \fm_n \le C_+ n^{\alpha_+}$.
    \item[(b)] $\forall n \in [n_0, \infty) \cap \mathbb{N} \, :~~ \quad \fm_n \le K $.
  \end{itemize}
  \begin{figure}[b!]
  \begin{center}	
  \input{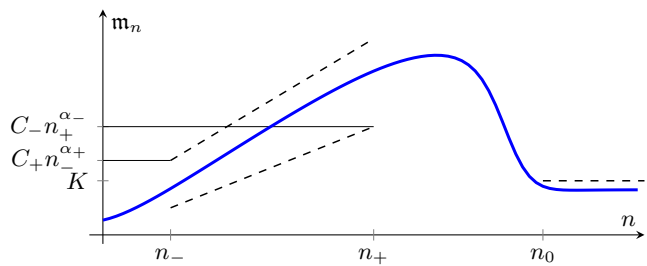}
  \end{center}
 \caption{\it Schematically illustrating the modulation curve $n\mapsto\fm_n$ for FSS on the circle. \label{fig:circular-modulation-scheme} Along $[n_-, n_+]$ the curve is between the lower ($C_- n^{\alpha_-}$) and upper ($C_+ n^{\alpha_+}$) bounds (dashed), satisfying the condition $C_+ n_-^{\alpha_+}\leq C_- n_+^{\alpha_-}$, and for $n\geq n_0$ it is below the horizontal upper bound (dashed). 
 }
\end{figure}
 Although under FSS, $\fm_n$ is eventually constant, i.e. the asymptotic rate of $\mun$ is the classical $n^{-1/2}$, for nonvanishing intervals of sample sizes $[n_-,n_+]$, the ``finite sample'' rate is (in expectation) between 
 $$ \Big(n^{-\frac{1}{2}} < \Big) \quad n^{-\frac{1-\alpha_-}{2}}\mbox{ and }n^{-\frac{1-\alpha_+}{2}}\,,$$
 i.e. like a smeary rate, cf. \cite{HundrieserEltznerHuckemann2020}.    
 
 Of course, as illustrated in Figure \ref{fig:circular-modulation-curves}, the modulation curve can be subject to different regimes of $\alpha_-$ and $\alpha_+$, in applications, typically the first regime is of interest, cf. Section \ref{scn:testing}. 
 
 \section{Correcting for Finite Sample Smeariness in Statistical Testing}\label{scn:testing}
 
The central limit theorem by \cite{HL98} and \cite{BP05} for an $m$-dimensional manifold $M$, cf. also \cite{H_Procrustes_10,H_ziez_geod_10,BL17} for sample Fr\'echet means $\mun$, has been extended by \cite{EltznerHuckemann2019} to random variables no longer avoiding arbitrary neighborhoods of possible cut points of the Fr\'echet mean $\mu$. Under nonsmeariness it has the following form:
\[ \sqrt{n}\, \phi(\mun) \inD \cN\left(0, 4\,H^{-1} \Sigma H^{-1}\right)\,. \]
\begin{figure}[b!]
  \centering
  \includegraphics[width = \textwidth, trim = 0 0 0 0, clip]{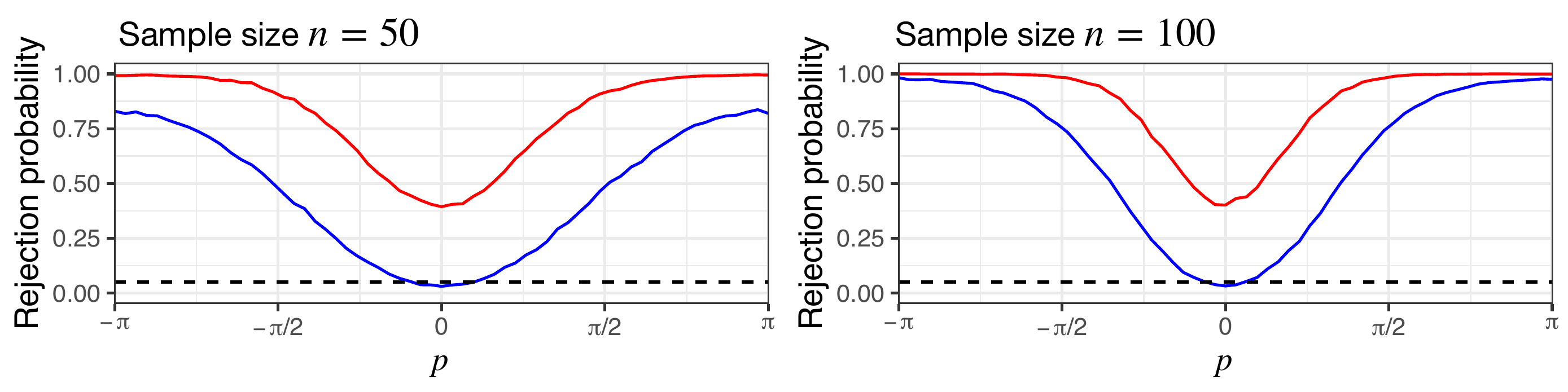}
  \caption{\it Empirical rejection probabilities of quantile based tests (red) and bootstrap based tests (blue) to test for significance $95\%$ if two samples of size $n = 50$ (left) and $n = 100$ (right) have identical Fr\'echet means. The two samples are taken independently from a von Mises distribution with mean $\mu=0$ and $\mu = p$, respectively, and concentration $\kappa = 1/2$. The dashed line represents $5\%$. }
  \label{fig:RejectionCurves}
\end{figure}
Here $\phi$ is a local chart mapping $\mu$ to the origin, $H$ is the expected value of the Hessian of the Fr\'echet function $F$ from (\ref{eq:Frechet-fcns}) in that chart at $\mu$ and $\Sigma$ is the covariance of $\phi(X)$. In practical applications, $H$ is usually ignored, as it has got no straightforward plugin estimators, and $4 H^{-1} \Sigma H^{-1}$ is simply estimated by the empirical covariance $\widehat{\Sigma}_n$ of $\phi(X_1),\ldots,\phi(X_n)$
giving rise to the approximation
\begin{eqnarray}\label{eq:BP-test} n \phi(\mun)^T\widehat{\Sigma}_n^{-1}\phi(\mun)&\inD& \chi^2_m\,,\end{eqnarray}
e.g. \cite{BP05,BL17}. For finite samples sizes, this approximation depends crucially on 
$$\fm_n = \frac{\EE[n\|\phi(\mun)\|^2]}{\EE[\tr(\widehat{\Sigma}_n)]}=1\,,$$
and it is bad in regimes whenever $\fm_n\gg1$. 
This is illustrated in Figure~\ref{fig:RejectionCurves} where two samples from von Mises distributions with concentration $\kappa = 1/2$ are tested for equality of Fr\'echet means. 
Indeed the quantile based test does not keep the nominal level, whereas the bootstrap based test, see \cite{EH2017}, keeps the level fairly well and is shown to be consistent under FSS on $\SSo$, cf. \cite{HundrieserEltznerHuckemann2020}.
 
 Moreover, Table \ref{tab:wind} shows a comparison of $p$-values of the quantile test based on (\ref{eq:BP-test}) and the suitably designed bootstrap test for daily wind directions taken at Basel for the years 2018, 2019, and 2020, cf. Figure \ref{fig:wind}. While the quantile based test asserts that the year 2018 is high significantly different from 2019 and 2020, the bootstrap based test shows that a significant difference can be asserted at most for the comparison between 2018 and 2019.
	The reason for the difference in $p$-values between quantile and bootstrap based test is the presence of FSS in the data, i.e. $\fm_n \gg 1$.
   Indeed, estimating for $n = 365$ the modulation $\fm_n$ of the yearly data using $B = 10.000$ bootstrap repetitions, as further detailed in \cite{HundrieserEltznerHuckemann2020}, yields $\fm_n^{2018} = 2.99$, $\fm_n^{2019} = 2.97$, and $\fm_n^{2020} = 4.08$.
%
%
 
  \begin{table}[t!]
  \centering
 \begin{tabular}{r||c|c|c} $p$-value\;&\;2018 vs. 2019\;  & \;2019 vs. 2020 \;&\;  2018 vs. 2020\\ \hline
  quantile based test\;& $0.00071$ &$0.27$&$0.019$\\
  bootstrap based test\;& $0.047$\textcolor{white}{00}&$0.59$ &$0.21$\textcolor{white}{0}
  \end{tabular} 
  \vspace*{0.5cm}
  \caption{\it Comparing $p$-values of the quantile based test for equality of means of yearly wind data from Basel (Figure \ref{fig:wind}), based on (\ref{eq:BP-test}) with the bootstrap test amending for FSS proposed in \cite{HundrieserEltznerHuckemann2020} for $B= 10.000$ bootstrap realizations. \label{tab:wind}}
  \end{table}
 
 \begin{figure}[t!]
 \centering
\includegraphics[width = \textwidth, trim = 0 20 0 5, clip]{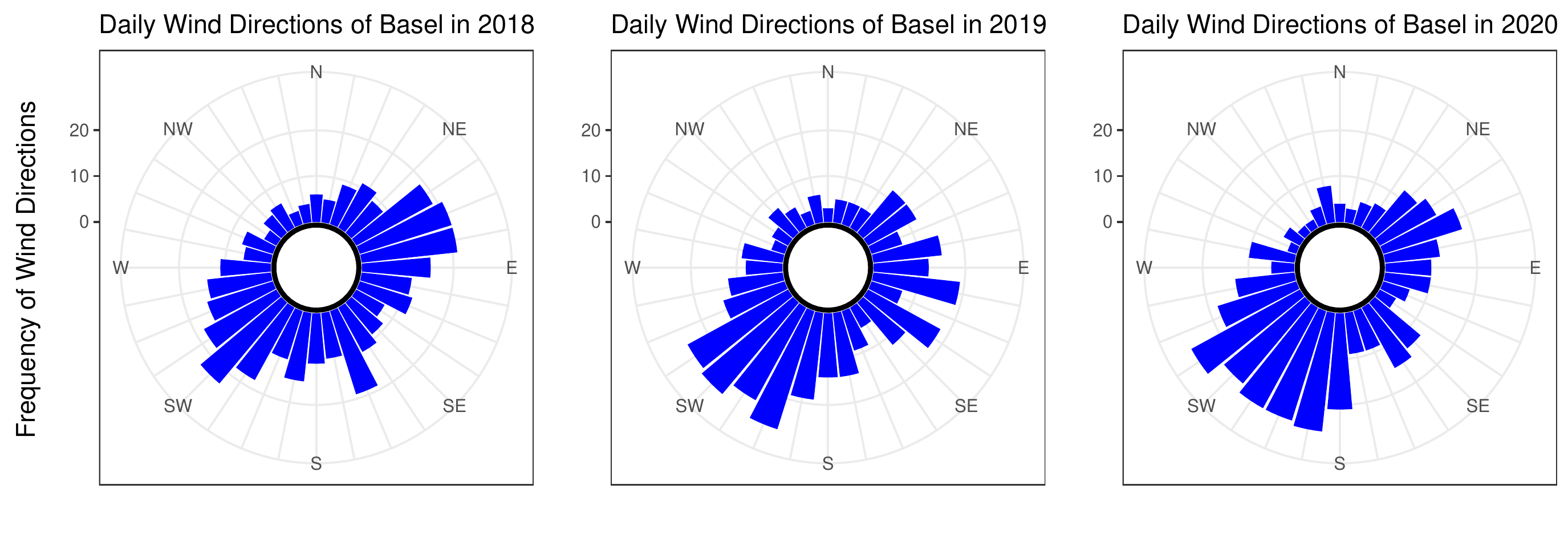}  
\caption{\it Histograms of daily wind directions for Basel (provided by meteoblue AG) for 2018 (left), 2019 (center), and 2020 (right).}
 \label{fig:wind}
 \end{figure}


%
%

 \section{Finite Sample Smeariness Universality}
 
 Consider  $p \in \SSS$ parametrized as $(\theta, \sin\theta q) \in \SSS$ where $\theta \in [0,\pi]$ denotes distance from the north pole $\mu \in \SSS$ and $q \in \mathbb{S}^{m-1}$, which is rescaled by $\sin\theta$.  
 
\begin{Th}
 Let $m \ge 4$, $Y$ uniformly distributed on $\mathbb{S}^{m-1}$ and $K>1$ arbitrary. Then there are $\theta^* \in (\pi/2,\pi)$ and $\alpha \in (0,1)$ such that for every $\theta \in (\theta^*,\pi)$ a random variable $X$ on $\SSS$ with $\Prb\{X=(\theta, \sin \theta\, Y)\}= \alpha$ and $\Prb\{X=\mu\}= 1 - \alpha$ features
$  \sup_{n\in \NN} \limits \fm_n \ge \lim_{n \to \infty} \limits \fm_n > K.$
  In particular,  
  $\theta^*=\frac{\pi}{2} + \mathcal{O}(m^{-1})\,.$
\end{Th}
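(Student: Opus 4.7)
The plan is to compute $\lim_{n\to\infty}\fm_n$ in closed form by exploiting the rotational symmetry of $X$ around the polar axis and then tune $\alpha$ and $\theta^*$ so that this limit exceeds $K$. Because $X$ is invariant under rotations fixing $\mu$ and $-\mu$, both the covariance $\Sigma$ of the log chart $\phi(X)$ at $\mu$ and the expected Hessian $H := \Hess F(\mu)$ are scalar multiples of the identity on $T_\mu\SSS \cong \RR^m$, which collapses the CLT asymptotic $nV_n \to \tr(4H^{-1}\Sigma H^{-1})$ to a single scalar equation.

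First the elementary inputs. One has $V = F(\mu) = \alpha\theta^2$, and in the log chart $\phi(X) = \theta Y$ with probability $\alpha$ and $\phi(X) = 0$ otherwise, whence $\EE[\phi(X)] = 0$ and $\Sigma = (\alpha\theta^2/m)\,I$ using $\EE[YY^\top] = I/m$. The key technical step is the Hessian. I would use the classical sphere formula: $\Hess d(x,\cdot)^2$ at $p$ equals $2$ along the minimizing geodesic from $p$ to $x$ and $2\,r\cot r$ in perpendicular directions, with $r = d(p,x)$. Decomposing a unit tangent vector $v = (0,e) \in T_\mu\SSS$ with respect to the geodesic $\mu \to P_q = (\cos\theta,\sin\theta\,q)$, averaging over $q$ uniform on $\mathbb{S}^{m-1}$ via $\EE_q[(e\cdot q)^2] = 1/m$, and adding the contribution $2(1-\alpha)I$ of the atom at $\mu$, yields
\begin{equation*}
H \;=\; 2\bigl[1 - \alpha\,\tfrac{m-1}{m}(1-\theta\cot\theta)\bigr]\,I.
\end{equation*}
Writing $\beta(\theta) := 1-\theta\cot\theta$, which increases from $1$ at $\theta = \pi/2$ to infinity as $\theta \to \pi$, the CLT of \cite{EltznerHuckemann2019}---valid as long as $\alpha\tfrac{m-1}{m}\beta(\theta) < 1$ so that $H$ is positive definite---then gives
\begin{equation*}
\lim_{n\to\infty}\fm_n \;=\; \frac{\tr\!\bigl(4H^{-1}\Sigma H^{-1}\bigr)}{V} \;=\; \frac{1}{\bigl[1-\alpha\,\tfrac{m-1}{m}\beta(\theta)\bigr]^{2}}.
\end{equation*}

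To finish, I would choose $\alpha$ just below its upper admissible value so that $\alpha\tfrac{m-1}{m}\beta(\theta)$ sits slightly below $1$, making the denominator arbitrarily small. The threshold $\theta^*$ is determined by $\alpha\tfrac{m-1}{m}\beta(\theta^*) = 1-1/\sqrt{K}$; the Taylor expansion $\beta(\pi/2+s) = 1 + (\pi/2)\,s + O(s^2)$ combined with $\alpha$ taken near $\tfrac{m}{m-1}(1-1/\sqrt{K})$ (which lies in $(0,1)$ for $m\ge 4$ and moderate $K$, and more generally for $m > \sqrt{K}$) then produces $\theta^* - \pi/2 = O(1/m)$, matching the ``in particular'' clause.

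The principal obstacle will be verifying that $\mu$ is the \emph{global} Fr\'echet minimizer in the relevant regime, not merely a local one, as Assumption \ref{As:1} requires. Rotational symmetry reduces this to a one-dimensional analysis of the scalar function $F_0(\psi) = \alpha\,\EE_Y[\arccos^2(\cos\theta\cos\psi + \sin\theta\sin\psi\,(Y\cdot e))] + (1-\alpha)\,\psi^2$ on $[0,\pi]$. I would rule out any interior critical point by a direct examination of $F_0'$, using the boundedness of the density of $Y\cdot e$ which holds for $m \ge 4$, and then compare $F_0(0) = \alpha\theta^2$ with $F_0(\pi) = \alpha(\pi-\theta)^2 + (1-\alpha)\pi^2$ to establish global dominance of $\mu$ whenever $\alpha < \pi/(2\theta)$, a constraint compatible with the choice of $\alpha$ made above.
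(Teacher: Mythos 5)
The paper's own proof of this theorem is a one-line citation to Theorem~4.3 and Lemma~A.5 of \cite{Eltzner2020a}, so any self-contained argument is necessarily a different route, and yours captures the right computation. The Hessian decomposition via $2$ radially and $2r\cot r$ perpendicularly, the average $\EE[(e\cdot q)^2]=1/m$, the resulting $H = 2\bigl[1-\alpha\tfrac{m-1}{m}\beta(\theta)\bigr]I$ with $\beta(\theta)=1-\theta\cot\theta$, the covariance $\Sigma=(\alpha\theta^2/m)I$, and the closed form $\lim_n\fm_n = \bigl[1-\alpha\tfrac{m-1}{m}\beta(\theta)\bigr]^{-2}$ are all correct, and they explain where the arbitrarily large modulation comes from.

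There is, however, a genuine gap you flag only obliquely. You correctly note that the CLT formula is valid ``as long as $\alpha\tfrac{m-1}{m}\beta(\theta)<1$'' so that $H\succ 0$. But $\beta(\theta)\uparrow\infty$ as $\theta\to\pi$, so for any \emph{fixed} $\alpha\in(0,1)$ this positivity condition fails on a terminal subinterval of $(\theta^*,\pi)$. There $H$ is negative definite, $\mu$ is a local \emph{maximum} of $F$, Assumption~\ref{As:1}(2) is violated, and the quantity $\fm_n$ no longer measures finite sample smeariness in any meaningful sense. Since the theorem's conclusion is asserted for every $\theta\in(\theta^*,\pi)$ with one fixed $\alpha$, you must either (i) show the statement degenerates harmlessly there (e.g.\ argue $\fm_n\to\infty$ trivially, but then note this is about local means only, as the paper's introduction hints), or (ii) restrict to $\theta\in(\theta^*,\theta^{**})$ with $\theta^{**}$ the root of $\alpha\tfrac{m-1}{m}\beta(\theta^{**})=1$. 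Your closing paragraph on global minimality is also too thin: comparing $F_0(0)$ with $F_0(\pi)$ does not exclude an interior minimizer on $(0,\pi)$, and for $\theta$ near $\pi$ such an interior minimizer actually exists (precisely because $\mu$ has become a local max), which would moreover produce a positive-dimensional minimizing set, violating uniqueness. Finally, your concrete choice $\alpha\approx\tfrac{m}{m-1}(1-1/\sqrt K)$ only lies in $(0,1)$ when $K<m^2$; to reach arbitrary $K$ you should instead fix a small $\alpha$ and push $\theta^*$ to where $\beta(\theta^*)=\tfrac{m}{(m-1)\alpha}(1-1/\sqrt K)$, at which point the claimed $\theta^*-\pi/2 = \mathcal O(m^{-1})$ requires a separate justification tying $\alpha$ to $m$.
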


 \begin{proof} The first assertion follows from \cite[Theorem 4.3]{Eltzner2020a} and its proof in Appendix A.5 there. Notably $\theta^* = \theta_{m,4}$ there. The second assertion has been shown in \cite[Lemma A.5]{Eltzner2020a}.\end{proof}

\begin{Th}
  Let $X$ be a random variable on $\mathbb{S}^m$ with $m \ge 2$ with unique nonsmeary mean $\mu$, which is invariant under rotation around $\mu$ and which is not a point mass at $\mu$. Then $\mu$ is Type I finite sample smeary.
\end{Th}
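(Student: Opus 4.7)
My plan is to exploit rotation invariance to compute $\lim_{n\to\infty}\fm_n$ in closed form and show it strictly exceeds $1$. Decompose $X = \exp_\mu(\Theta\xi)$ with $\Theta \in [0,\pi]$ and, by rotational symmetry, $\xi$ uniform on $\mathbb{S}^{m-1}$ independent of $\Theta$. By Assumption~\ref{As:1} the antipode $X = -\mu$ has probability zero, so the logarithmic chart $\phi = \log_\mu$ is well-defined a.s. Under any rotation around $\mu$, both the Hessian $H$ of $F$ at $\mu$ and the covariance $\Sigma = \cov(\phi(X))$ are invariant; since the only symmetric matrices commuting with the full rotation group are scalar multiples of the identity, $H = h I_m$ (with $h > 0$ by nonsmeariness) and $\Sigma = (V/m) I_m$, using $\tr(\Sigma) = \EE[\|\phi(X)\|^2] = V$.

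Plugging these into the Eltzner--Huckemann CLT from Section~\ref{scn:testing}, $\sqrt{n}\,\phi(\mun) \inD \cN(0,4H^{-1}\Sigma H^{-1})$, and assuming the uniform integrability of $n\|\phi(\mun)\|^2$ addressed below, I obtain
\[ \lim_{n\to\infty} n V_n \;=\; \tr\bigl(4 H^{-1}\Sigma H^{-1}\bigr) \;=\; \frac{4V}{h^2}, \]
so $\lim_n \fm_n = 4/h^2$. The theorem thus reduces to the strict inequality $h < 2$.

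To compute $h$, I parametrise the geodesic $p(t) = \exp_\mu(te_1)$ and apply the spherical law of cosines, $\cos d(X,p(t)) = \cos\Theta\cos t + \sin\Theta\sin t\cdot\xi_1$. Differentiating $d(X,p(t))^2$ twice at $t = 0$ and taking expectations (using $\EE[\xi_1^2] = 1/m$) yields
\[ h \;=\; \frac{2}{m} + \frac{2(m-1)}{m}\,\EE[\Theta\cot\Theta]. \]
The elementary bound $\Theta\cot\Theta < 1$ on $(0,\pi)$---equivalent to $\sin(2\Theta) < 2\Theta$ for $\Theta \in (0,\pi)$---together with $\Prb\{\Theta > 0\} > 0$ (since $X$ is not a point mass at $\mu$) gives $\EE[\Theta\cot\Theta] < 1$, hence $h < 2$. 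Combined with the bound $\sup_n \fm_n < \infty$ from nonsmeariness, this is exactly Type~I finite sample smeariness.

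I expect the main technical obstacle to be the passage from the distributional CLT to the convergence of second moments: the CLT is only a weak statement, whereas $\fm_n$ is a ratio of second moments. The trivial bound $\|\phi(\mun)\| \le \pi$ is not enough on its own; an $n$-uniform tail estimate for $\mun$ near $\mu$ is required, which I would obtain by adapting the M-estimator concentration arguments used for $\SSo$ in \cite{HundrieserEltznerHuckemann2020}, using the strict positivity $h > 0$ to get uniform local convexity of $F$ around $\mu$ and therefore exponential control on $\Prb\{\|\phi(\mun)\| > \delta\}$.
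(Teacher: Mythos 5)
Your proposal is correct and follows essentially the same route as the paper's proof: both reduce by rotational invariance to computing the Hessian of the Fr\'echet function, arrive at the same $\theta\cot\theta$ expression (you derive it directly via the spherical law of cosines where the paper cites the computation from Eltzner 2020a), and conclude from $\theta\cot\theta<1$ on $(0,\pi)$ that $\textnormal{Hess}\,F(\mu)<2\,\textnormal{Id}_m$, hence $\lim_n\fm_n>1$. You go slightly further than the paper in making the limit $4/h^2$ explicit and in flagging the passage from the distributional CLT to convergence of second moments (which the paper leaves implicit via its $n\,\textnormal{Cov}[\widehat{\mu}_n]\to\Sigma[\mu]$ citation); one small slip is the parenthetical claim that $\Theta\cot\Theta<1$ is ``equivalent'' to $\sin(2\Theta)<2\Theta$ --- it rearranges to $\Theta\cos\Theta<\sin\Theta$, a different (though also true) inequality.
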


\begin{proof}
  From \cite{Eltzner2020a}, page 17, we see that the Fr\'echet function $F_\theta$ for a uniform distribution on the $\mathbb{S}^{m-1}$ at polar angle $\theta$ evaluated at a point with polar angle $\psi$ from the north pole is
  \begin{align*}
    a(\psi, \theta, \phi) :=& \arccos\left( \cos\psi \cos\theta + \sin\psi \sin\theta \cos\phi \right)\\
    F_\theta (\psi) =& \left( \int_0^{2\pi} \sin^{m-2}\phi \, d\phi \right)^{-1} \int_0^{2\pi} \sin^{m-2}\phi \, a^2(\psi, \theta, \phi) \, d\phi \, .
  \end{align*}
  Defining a probability measure $d\mathbb{P}(\theta)$ on $[0,\pi]$, the Fr\'echet function for the corresponding rotation invariant random variable is
  \begin{align*}
    F (\psi) = \int_0^\pi F_\theta (\psi) \, d\mathbb{P}(\theta) \, .
  \end{align*}
  On page 17 of \cite{Eltzner2020a} the function
  \begin{align*}
    f_2(\theta,\psi) := \frac{1}{2} \sin^{m-1} \theta \int_0^{2\pi} \sin^{m-2}\phi \, d\phi \, \frac{d^2}{d\psi^2} F_\theta (\psi)
  \end{align*}
  is defined and from Equation (5) on page 19 we can calculate
  \begin{align*}
  f_2(\theta,0) =& \sin^{m-2} \theta \left( \frac{1}{m-1} \sin\theta + \theta \cos\theta \right) \int_0^{2\pi} \sin^{m}\phi \, d\phi\\
  =& \sin^{m-1} \theta \int_0^{2\pi} \sin^{m-2}\phi \, d\phi \left( \frac{1}{m} + \frac{m-1}{m} \theta \cot\theta \right)\, , 
  \end{align*}
  which yields the Hessian of the Fr\'echet function for $d\mathbb{P}(\theta)$ as
  \begin{align*}
    \textnormal{Hess}F(0) &= 2 \text{Id}_m \int_0^\pi \left( \frac{1}{m} + \frac{m-1}{m} \theta \cot \theta \right) d\mathbb{P}(\theta) \, .
  \end{align*}
  One sees that $\theta \cot \theta \le 0$ for $\theta \ge \pi/2$. For $\theta \in (0, \pi/2)$ we have
  \begin{align*}
    \tan \theta > \theta \quad \Leftrightarrow \quad \theta \cot \theta < 1 \quad \Leftrightarrow \quad \left( \frac{1}{m} + \frac{m-1}{m} \theta \cot \theta \right) < 1 \, .
  \end{align*}
  Using $\Sigma[\mu]$ to denote the CLT limit $n \textnormal{Cov}[\widehat{\mu}_n] \to \Sigma[\mu]$, cf. \cite{BP05}, we get the result
  \begin{align*}
    \textnormal{Hess}F(\mu) < 2 \text{Id}_m \quad \Rightarrow \quad \Sigma[\mu] > \textnormal{Cov} \left[\log_\mu X \right] \quad \Rightarrow \quad \tr \left( \Sigma[\mu] \right) > \textnormal{Var}[X] \, .
  \end{align*}
  The claim follows at once.
\end{proof}

\begin{Conj}
  Let $X$ be a random variable supported on a set $A \subset \SSS$ whose convex closure has nonzero volume and which has a unique mean $\mu$. Then $\mu$ is Type I finite sample smeary.
\end{Conj}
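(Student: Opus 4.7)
The plan is to adapt the Hessian-based argument of the preceding theorem to the rotation-free setting. The key ingredient is the standard Jacobi-field formula for the Hessian of the squared distance on $\SSm$: for every $x\in\SSm\setminus\{\pm\mu\}$ at geodesic distance $\theta:=d(x,\mu)\in(0,\pi)$ with unit initial direction $u(x):=\log_\mu x/\theta$,
\[
\tfrac{1}{2}\,\Hess d(\cdot,x)^{2}(\mu) \;=\; u(x)\,u(x)^{T} + \theta\cot\theta\,\bigl(\text{Id}_m - u(x)\,u(x)^{T}\bigr).
\]
Taking expectation and writing $H:=\Hess F(\mu)$ yields
\[
\tfrac{1}{2}\,H \;=\; \EE[\theta\cot\theta]\,\text{Id}_m + \EE\bigl[(1-\theta\cot\theta)\,u(X)\,u(X)^{T}\bigr],
\]
the rotationally symmetric case from the preceding theorem being the special subcase in which $u(X)u(X)^T$ averages to $m^{-1}\text{Id}_m$.

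The central step is to show the strict operator inequality $H\prec 2\,\text{Id}_m$. Elementary calculus (e.g.\ $\sin(2\theta)<2\theta$) gives $\theta\cot\theta<1$ for every $\theta\in(0,\pi)$, hence $1-\theta\cot\theta>0$ almost surely under the hypothesis (the convex-closure condition excludes a point mass at $\mu$). For any unit $v\in T_\mu\SSm$,
\[
\tfrac{1}{2}\, v^{T} H v \;=\; \EE[\theta\cot\theta] + \EE\bigl[(1-\theta\cot\theta)(v\cdot u(X))^{2}\bigr] \;\leq\; 1,
\]
and equality forces $(v\cdot u(X))^{2}=1$ a.s., i.e.\ it confines the support $A$ to the single great circle through $\pm\mu$ with tangent $\pm v$. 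Such a circle is one-dimensional, so its convex closure has vanishing $m$-dimensional volume, contradicting the hypothesis. A compactness argument on the unit sphere in $T_\mu\SSm$ lifts the strict pointwise bound to $H\prec 2\,\text{Id}_m$.

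To finish, invoke the CLT in the form used in the preceding theorem: $nV_n\to\tr(\Sigma[\mu])$ with $\Sigma[\mu]=4\,H^{-1}\,\Sigma\,H^{-1}$ and $\Sigma:=\cov[\log_\mu X]$. Since $H$ is symmetric positive definite with all eigenvalues strictly below $2$, the matrix $4\,H^{-2}-\text{Id}_m$ is positive definite. Because $X$ is not a.s.\ equal to $\mu$, the semidefinite matrix $\Sigma$ is nonzero, so
\[
\tr(\Sigma[\mu])-V \;=\; \tr\bigl((4\,H^{-2}-\text{Id}_m)\,\Sigma\bigr) \;>\; 0,
\]
which gives $\lim_{n\to\infty}\fm_n = \tr(\Sigma[\mu])/V>1$, i.e.\ Type~I FSS.

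The main obstacle is that the conjecture does not explicitly assume non-smeariness of $\mu$, which is needed to invoke the CLT formula for $\Sigma[\mu]$; a preliminary step must establish $H\succ 0$ (this should follow from the full-volume hypothesis together with uniqueness of $\mu$, but requires separate verification). A secondary subtlety is fixing a notion of \emph{convex closure} on $\SSm$ strong enough so that nonzero volume forces $A$ off every great circle through $\mu$; using, for instance, the intersection of $\SSm$ with the ambient Euclidean convex hull is adequate. With these two points resolved, the three-step strategy above completes the proof.
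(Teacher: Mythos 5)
This statement is labeled a \emph{Conjecture} in the paper, and the authors give no proof, so there is nothing internal to compare your argument against. What follows is therefore a review of your proposal on its own merits and against the proof of the \emph{preceding} theorem, which yours visibly generalizes.

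Your decomposition
$\tfrac{1}{2}\Hess d(\cdot,x)^{2}(\mu)=\theta\cot\theta\,\text{Id}_m+(1-\theta\cot\theta)\,u(x)u(x)^{T}$
is the correct Jacobi-field identity, and averaging over a rotationally symmetric law with $\EE[u(X)u(X)^T]=m^{-1}\text{Id}_m$ recovers exactly the paper's formula $\Hess F(0)=2\,\text{Id}_m\int(\tfrac1m+\tfrac{m-1}{m}\theta\cot\theta)\,d\Prb(\theta)$, so your argument is a genuine generalization rather than a parallel one. You also improve on a gap the paper glosses over in the rotationally symmetric proof: the chain $\Hess F(\mu)\prec 2\,\text{Id}_m\Rightarrow\Sigma[\mu]\succ\cov[\log_\mu X]$ is only a Loewner-order statement when $H$ is a scalar multiple of the identity; your trace-cyclicity step $\tr\bigl(4H^{-1}\Sigma H^{-1}\bigr)=\tr\bigl(4H^{-2}\Sigma\bigr)$ followed by $4H^{-2}\succ\text{Id}_m$ is the right way to extract the scalar conclusion $\tr(\Sigma[\mu])>V$ without assuming $H$ and $\Sigma$ commute. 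The equality analysis forcing $u(X)=\pm v$ a.s.\ onto a great circle, and the observation that the spherical convex closure of a great circle is the circle itself (zero $m$-volume), is also correct.

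The two gaps you flag yourself are the real ones, and the first is not minor. The preceding theorem explicitly assumes a \emph{nonsmeary} mean, which is what licenses writing $\Sigma[\mu]=4H^{-1}\Sigma H^{-1}$ with $H\succ0$; the conjecture drops that word. From a local minimum you only get $H\succeq0$, and the convex-closure/volume hypothesis does not obviously force strict positivity: a degenerate eigenvalue of $H$ would make $\mu$ smeary, which is a disjoint category from Type~I FSS in the paper's own taxonomy. So either the conjecture silently assumes nonsmeariness, or a nontrivial lemma is still owed showing that full-dimensional support forbids a kernel of $H$. Secondly, your conclusion uses $nV_n=\EE\bigl[n\,d(\mun,\mu)^2\bigr]\to\tr(\Sigma[\mu])$, which is a convergence of second moments, not just convergence in distribution; this requires a uniform integrability argument for $n\|\phi(\mun)\|^2$ that neither you nor the paper supplies. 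One further small inaccuracy: the hypothesis rules out $X=\mu$ a.s., not a partial atom at $\mu$; your Hessian computation still goes through (the atom contributes $2\,\text{Id}_m$ and the bound $\tfrac12 v^THv\le1$ is unaffected), but the parenthetical ``excludes a point mass at $\mu$'' should be weakened accordingly. With the $H\succ0$ lemma supplied and uniform integrability verified, your three-step scheme would turn the conjecture into a theorem.
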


\begingroup
\let\clearpage\relax

\vspace*{\baselineskip}

\endgroup

\end{document}